\begin{document}

\newtheorem{theorem}{Theorem}[section]
\newtheorem{corollary}[theorem]{Corollary}
\newtheorem{definition}[theorem]{Definition}
\newtheorem{conjecture}[theorem]{Conjecture}
\newtheorem{question}[theorem]{Question}
\newtheorem{problem}[theorem]{Problem}
\newtheorem{lemma}[theorem]{Lemma}
\newtheorem{proposition}[theorem]{Proposition}
\newtheorem{example}[theorem]{Example}
\newenvironment{proof}{\noindent {\bf
Proof.}}{\rule{3mm}{3mm}\par\medskip}
\newcommand{\remark}{\medskip\par\noindent {\bf Remark.~~}}
\newcommand{\pp}{{\it p.}}
\newcommand{\de}{\em}

\newcommand{\JEC}{{\it Europ. J. Combinatorics},  }
\newcommand{\JCTB}{{\it J. Combin. Theory Ser. B.}, }
\newcommand{\JCT}{{\it J. Combin. Theory}, }
\newcommand{\JGT}{{\it J. Graph Theory}, }
\newcommand{\ComHung}{{\it Combinatorica}, }
\newcommand{\DM}{{\it Discrete Math.}, }
\newcommand{\ARS}{{\it Ars Combin.}, }
\newcommand{\SIAMDM}{{\it SIAM J. Discrete Math.}, }
\newcommand{\SIAMADM}{{\it SIAM J. Algebraic Discrete Methods}, }
\newcommand{\SIAMC}{{\it SIAM J. Comput.}, }
\newcommand{\ConAMS}{{\it Contemp. Math. AMS}, }
\newcommand{\TransAMS}{{\it Trans. Amer. Math. Soc.}, }
\newcommand{\AnDM}{{\it Ann. Discrete Math.}, }
\newcommand{\NBS}{{\it J. Res. Nat. Bur. Standards} {\rm B}, }
\newcommand{\ConNum}{{\it Congr. Numer.}, }
\newcommand{\CJM}{{\it Canad. J. Math.}, }
\newcommand{\JLMS}{{\it J. London Math. Soc.}, }
\newcommand{\PLMS}{{\it Proc. London Math. Soc.}, }
\newcommand{\PAMS}{{\it Proc. Amer. Math. Soc.}, }
\newcommand{\JCMCC}{{\it J. Combin. Math. Combin. Comput.}, }
\newcommand{\GC}{{\it Graphs Combin.}, }

\title{ Spectral Radius of Uniform Hypergraphs and Degree Sequences
\thanks{
This work is supported by the National Natural Science Foundation of China (Nos.11531001 and 11271256),  the Joint NSFC-ISF Research Program (jointly funded by the National Natural Science Foundation of China and the Israel Science Foundation (No. 11561141001)),  Innovation Program of Shanghai Municipal Education Commission (No. 14ZZ016) and Specialized Research Fund for the Doctoral Program of Higher Education (No.20130073110075).  }}
\author{  Dong-Mei Chen$^{1}$, Zhi-Bing  Chen$^{1}$, Xiao-Dong Zhang$^{2}$\thanks{Corresponding  author ({\it E-mail address:}
xiaodong@sjtu.edu.cn)}
\\
{\small $^{1}$College of Mathematics and Statistics}\\
{\small Shenzhen University} \\
{\small   3688 Nanhai Road, Shenzhen 518060,  P.R. China}\\
{\small $^{2}$School fo Mathematical Sciences,  MOE-LSC, SHL-MAC}\\
{\small Shanghai Jiao Tong University} \\
{\small  800 Dongchuan road, Shanghai, 200240,  P.R. China}\\
 }
\date{}
\maketitle
 \begin{abstract}
In this paper,  we present upper bounds for the adjacency and signless Laplacian spectral radii of uniform hypergraphs in terms of degree sequences. 
 \end{abstract}

{{\bf Key words:} Spectral radius; Uniform hypergraph; Degree sequence.
 }

      {{\bf AMS Classifications:} 05C50, 05C05, 05C40.}
\vskip 0.5cm

\section{Introduction}

Let ${\mathcal{H}}=(V(\mathcal{H}), E(\mathcal{H}))$ be a simple (i.e., no loops or multiedges) hypergraph, where the vertex set $V(\mathcal{H})=[n]:=\{1, \cdots, n\}$ and the edge set $E(\mathcal{H})=\{e_1, \cdots, e_m\}$ with $e_i\subseteq V(\mathcal{H})$ for $i=1, \cdots, m$.  Further, if $|e_i|=k$ for $i=1, \cdots, m$, then $\mathcal{H}$ is called a {\it $k-$uniform hypergraph}.  The {\it degree} of vertex $v\in V(\mathcal{H})$ in a hypergraph $\mathcal{H}$, written $d_v$,  is the number of edges incident to $v$, i.e., $d_v=|
\{ e\in  E(\mathcal{H}) \ \ | \ \ v\in e\}|$.  The sequence $(d_1, \cdots, d_n)$ is called {\it degree sequence} of $\mathcal{H}$.  If the degree of each vertex is equal to $d$, i.e., $d_1=\cdots=d_n=k$,  then $\mathcal{H}$ is called a {\it $d-$reguar hypergraph.}

Let $n$ and $p$ be two positive integers. An order $p$ and  dimension $n$  {\it tensor} $\mathcal{A}=(a_{i_1i_2\cdots i_p})$ over the complex field $C$ is a multidimensional array with all entries $a_{i_1i_2\cdots i_p}\in C$  for $i_j=1, \cdots, n$ and $ j=1,\cdots, p$. Clearly,
  a tensor of order $1$ and dimension $n$ is a vector and  a tensor of order $2$ and dimension $n$ is an $n\times n$ matrix. For the sake of simplicity, $a_{i_1i_2\cdots i_p}$ is denoted by $a_{i_1\alpha},$ where $\alpha=i_2\cdots i_p\in [n]^{p-1}$.
  If $\mathcal{A}$ and $\mathcal{B}$ are two tensors of order $p\ge 2$ and order $q\ge 1$, dimension $n$, respectively, the product of two tensors $\mathcal{A}$ and $\mathcal{B}$ (see \cite{shao2012}) is defined to be
the tensor $\mathcal{C}=(c_{i\alpha_1\cdots \alpha_{m-1}}) :=\mathcal{A}\cdot \mathcal{B}$ is an order $(p-1)(q-1)$ and dimension $n$, where
$$c_{i\alpha_1\cdots \alpha_{p-1}}=\sum_{i_2, \cdots, i_p=1}^na_{ii_2\cdots i_p}b_{i_2\alpha_1}\cdots b_{i_p\alpha_{p-1}}, i\in [n], \alpha_1, \cdots, \alpha_{p-1}\in [n]^{q-1}.$$

For a tensor $\mathcal{T}$ of order $p\ge 2$ and dimension $n$, if there exists a complex number  $\lambda$ and a vector $x =(x_1,\cdots, x_n)^T$ (i.e. a tensor of order $1$ and dimension $n$ such that
$$ \mathcal{T}\cdot x=\lambda x^{[p]},$$
where $x^{[p]}=(x^{p-1}, \cdots, x^{p-1})^T$,
then $\lambda$ is called an {\it eigenvalue} of $\mathcal{T}$ and $x$ is called an {\it eigenvector} of $\mathcal{T}$ corresponding to the eigenvalue $\lambda$ (for example, see \cite{qi2005}).
               The largest modulus of eigenvalues of $\mathcal{T}$ is called {\it spectral radius} of $\mathcal{T}$ and denoted by $\rho(\mathcal{T})$.   It is known  (for example, see \cite{yang2010}) that for a nonnegative tensor $\mathcal{T}$,  $\rho(\mathcal{T})$ is a nonnegative eigenvalue and corresponding to a nonnegative eigenvector. The readers may refer to an excellent survey \cite{chang2013} for spectral theory of nonnegative tensors.

               For a hypergraph $\mathcal{H}$, there are a few tensors associated with $\mathcal{H}$.  The most important tensor associated with  $\mathcal{H}$ may be the adjacency tensor.
               The {\it adjacency tensor} of a $k-$uniform hypergraph $ \mathcal{H}$ on $n$ vertices is defined as the tensor
                $\mathcal{A}(\mathcal{H})=(a_{i_1\cdots i_k})$ of order $k$ and dimension $n$,
                where
                $$a_{i_1\cdots i_n}=\left\{ \begin{array}{ll}
                \frac{1}{(k-1)!}, & \mbox{if} \ e=\{i_1, \cdots, i_k\}\in E(\mathcal{H})\\
                0, & {\mbox{otherwise}.} \end{array}\right.$$

The spectral radius of the adjacency matrix $\mathcal{A}(\mathcal{H})$ of a $k-$uniform hypergraph is called {\it spectral radius} of $\mathcal{H}$ and denoted by $\rho(\mathcal{H})$.
The spectral theory of hypergraph has received more and more attention. For example, Cooper and Dutle \cite{cooper2012} gave an excellent survey on the spectral theory of $k-$uniform hypergraph.   Another important tensor associated with $k-$uniform hypergraph $\mathcal{H}$ is
signless Laplacian tensor.
Let $ \mathcal{D}(\mathcal{H})$ be a $k-$th order $n-$dimensional diagonal tensor whose diagonal entry $d_1, \cdots, d_n$. Then $\mathcal{Q}(\mathcal{H})=\mathcal{D}(\mathcal{H})+\mathcal{A}(\mathcal{H})$ is called {\it the signless Laplacian tensor} of  $H$. The spectral radius of $\mathcal{Q}(\mathcal{H})$ is called the signless Lapalcian spectral radius of $\mathcal{H}$ and denoted by $q(\mathcal{H})$.
Li et. al. \cite{limo2016} gave some  upper bounds for the $H-$spectral radius and $Z-$spectral radius of uniform hypergraphs in terms of parameters such as number of vertices, number of edges, maximum degree, and minimum degree.
 Yuan et.al. \cite{yuanqi2016} presented upper bounds for spectral radius and signless spectral raidus of hypergraphs in terms of the degrees of vertices.  The related results may be referred to \cite{friedlane2013,  khan2015, khan2016,  lishaoqi2015, pearson2014, yuanshao2016}.

  In this paper, we present some upper bound for spectral radius and signless Laplacian spectral radius of a $k-$unform hypergraph in terms of degree sequences, which extends some known results on hypergraphs. In Section 2, Some preliminaries and Lemmas are presented. In Section 3,  we present the main results of this paper and proof.

\section{Preliminaries}
In this section, we present some known results and lemmas
\begin{lemma}\label{max-min} \cite{yang2010, khan2015}
 Let $\mathcal{T}=(a_{i_1\cdots i_k})$ be an order $k$ and dimension $n$ tensor with  $k\ge 2$.  Then
  \begin{equation}\label{max-e1}
  \min\{ R_i(\mathcal{T}): 1\le i\le n\} \le \rho (\mathcal{T})\le \max\{ R_i(\mathcal{T}) : 1\le i\le n\},
  \end{equation}
  where $R_i(\mathcal{T})$ is the sum of row $i$ in $\mathcal{T}$, i.e., $R_i(\mathcal{T})=\sum_{i_2,\cdots, i_k=1}^{n}a_{ii_2\cdots i_k}$.
  Moreover, if $\mathcal{T}$ is weakly irreducible,  then either equality holds if and only if $R_1(\mathcal{T})=\cdots=R_n(\mathcal{T})$.
    \end{lemma}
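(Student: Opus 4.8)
The plan is to obtain both inequalities and the equality characterization from the Perron--Frobenius theory for nonnegative tensors, which is the setting of the cited references and the only one in which row-sum bounds of this type can hold; so throughout I treat $\mathcal{T}$ as nonnegative. Write $\rho:=\rho(\mathcal{T})$ and record the eigen-equation $\mathcal{T}\cdot x=\rho\,x^{[k]}$ coordinatewise as
$$\sum_{i_2,\dots,i_k=1}^{n}a_{ii_2\cdots i_k}\,x_{i_2}\cdots x_{i_k}=\rho\,x_i^{\,k-1},\qquad i\in[n].$$
First I would invoke that $\rho$ is an eigenvalue of $\mathcal{T}$ afforded by some nonnegative nonzero eigenvector $x=(x_1,\dots,x_n)^{T}$ \cite{yang2010}.

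For the upper bound, fix $i_0$ with $x_{i_0}=\max_i x_i$, so $x_{i_0}>0$; bounding each $x_{i_j}\le x_{i_0}$ in the $i_0$-th equation gives $\rho\,x_{i_0}^{\,k-1}\le\big(\sum_{i_2,\dots,i_k}a_{i_0 i_2\cdots i_k}\big)x_{i_0}^{\,k-1}=R_{i_0}(\mathcal{T})\,x_{i_0}^{\,k-1}$, hence $\rho\le R_{i_0}(\mathcal{T})\le\max_i R_i(\mathcal{T})$. For the lower bound, the mirror computation at an index minimizing $x_i$ works at once \emph{if} $x>0$; in the general (possibly reducible) case I would instead scale: set $r=\min_i R_i(\mathcal{T})$, assume $r>0$ (else $\rho\ge 0=r$), and let $\mathcal{T}'$ be obtained by multiplying each row $i$ by $r/R_i(\mathcal{T})\in(0,1]$. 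Then $0\le\mathcal{T}'\le\mathcal{T}$ entrywise and every row sum of $\mathcal{T}'$ equals $r$, so $\mathcal{T}'\cdot\mathbf{1}=r\,\mathbf{1}^{[k]}$; thus $r$ is an eigenvalue of $\mathcal{T}'$, giving $\rho(\mathcal{T}')\ge r$, while $\rho(\mathcal{T})\ge\rho(\mathcal{T}')$ by monotonicity of the spectral radius under entrywise domination \cite{yang2010}. Hence $\rho(\mathcal{T})\ge r$.

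For the equality statement let $\mathcal{T}$ be weakly irreducible, so its Perron eigenvector $x$ is strictly positive \cite{friedlane2013}. If all $R_i(\mathcal{T})$ equal $r$, then $\mathcal{T}\cdot\mathbf{1}=r\,\mathbf{1}^{[k]}$ exhibits a positive eigenvector, whence $\rho(\mathcal{T})=r$ by uniqueness of the positive eigenvalue in the weakly irreducible case, and both bounds are attained. Conversely suppose $\rho=\max_i R_i(\mathcal{T})$ (the case $\rho=\min_i R_i(\mathcal{T})$ being symmetric). For any $i$ with $x_i=\max_j x_j$ the chain $\rho\,x_i^{\,k-1}=\sum a_{ii_2\cdots i_k}x_{i_2}\cdots x_{i_k}\le R_i(\mathcal{T})\,x_i^{\,k-1}\le\rho\,x_i^{\,k-1}$ is forced to be equalities, so $\sum_{i_2,\dots,i_k}a_{ii_2\cdots i_k}\big(x_i^{\,k-1}-x_{i_2}\cdots x_{i_k}\big)=0$ with all summands $\ge 0$; therefore $a_{ii_2\cdots i_k}>0$ implies $x_{i_2}=\dots=x_{i_k}=x_i=\max_j x_j$. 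Thus the set $M=\{i:x_i=\max_j x_j\}$ has no outgoing arc in the digraph associated with $\mathcal{T}$, and weak irreducibility (strong connectedness of that digraph) forces $M=[n]$, that is, $x$ is constant. Substituting $x=x_{i_0}\mathbf{1}$ back into the eigen-equation yields $R_i(\mathcal{T})=\rho$ for every $i$, which finishes the proof.

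The ingredients I would quote rather than reprove are the existence of a nonnegative Perron eigenvector, its strict positivity and the uniqueness of the positive eigenvalue when $\mathcal{T}$ is weakly irreducible, and monotonicity of $\rho$ under entrywise domination. The one step that needs genuine care is the converse in the equality case: passing from the local identity "every nonnegative summand vanishes" to the global statement that $x$ is constant, which is exactly where the combinatorial propagation of the maximum value along the digraph of $\mathcal{T}$, made possible by weak irreducibility, comes in.
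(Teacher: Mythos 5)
The paper does not prove this lemma at all --- it is quoted from the cited references (Yang--Yang and Khan--Fan), so there is no in-paper argument to compare against. Your reconstruction is correct and is essentially the standard Perron--Frobenius argument from those sources. The upper bound via evaluating the eigen-equation at a coordinate maximizing the nonnegative Perron vector is right; the row-rescaling trick for the lower bound correctly sidesteps the fact that the Perron vector of a reducible nonnegative tensor may have zero entries (the na\"{\i}ve ``minimizing coordinate'' argument would break there, and you handle the degenerate case $\min_i R_i(\mathcal{T})=0$ separately); and the equality characterization correctly combines strict positivity of the Perron vector in the weakly irreducible case with propagation of the set $M$ of maximizing coordinates along the strongly connected digraph of $\mathcal{T}$. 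Two presentational points worth making explicit if this were written up: the lemma as stated omits the hypothesis that $\mathcal{T}$ be nonnegative, which you rightly restore (the bounds are false otherwise), and the quoted ingredients --- existence of a nonnegative Perron pair, monotonicity of $\rho$ under entrywise domination, positivity and uniqueness of the Perron pair under weak irreducibility --- are all genuinely available in \cite{yang2010} and \cite{friedlane2013}, so nothing circular is being assumed.
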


  \begin{lemma}\label{diag-sim}(\cite{shao2012})
  Let $\mathcal{A}$ and $\mathcal{B}$ be two order $k$ dimension $n$ tensors. If there exists a nonsingular diagonal matrix $D$ such that
  $\mathcal{B}=D^{-(k-1)} \cdot\mathcal{A} \cdot D$, then $\mathcal{A}$ and $\mathcal{B}$ have the same eigenvalues including multiplicity. In particular, they have the same spectral radius.
\end{lemma}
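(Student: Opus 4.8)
The plan is to reduce everything to a direct computation with the general tensor product, exploiting that $D$ is diagonal. Write $D=\mathrm{diag}(d_1,\dots,d_n)$ with every $d_i\neq 0$. First I would unwind the definition of the product to obtain the entries of $\mathcal{B}=D^{-(k-1)}\cdot\mathcal{A}\cdot D$ explicitly: since $(D^{-(k-1)})_{ij}=d_i^{-(k-1)}\delta_{ij}$ and $D_{ij}=d_i\delta_{ij}$, all the summations defining the product collapse, leaving
\[
b_{i i_2\cdots i_k}=d_i^{-(k-1)}\,a_{i i_2\cdots i_k}\prod_{\ell=2}^{k}d_{i_\ell},\qquad i,i_2,\dots,i_k\in[n].
\]
Along the way I would also record the special case $D^{-(k-1)}\cdot\mathcal{I}\cdot D=\mathcal{I}$, which comes out of the same collapse because the identity tensor is itself diagonal; this is needed for the multiplicity part.

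Next I would set up the bijection of eigenpairs. Suppose $\mathcal{A}\cdot x=\lambda x^{[k-1]}$ with $x\neq 0$, and put $y=D^{-1}x$, i.e.\ $y_i=d_i^{-1}x_i$; then $y\neq 0$, and substituting the entry formula for $b_{i i_2\cdots i_k}$ into $(\mathcal{B}\cdot y)_i$ makes the factors $\prod_{\ell=2}^{k}d_{i_\ell}$ cancel against $\prod_{\ell=2}^{k}y_{i_\ell}=\prod_{\ell=2}^{k}d_{i_\ell}^{-1}x_{i_\ell}$, so $(\mathcal{B}\cdot y)_i=d_i^{-(k-1)}(\mathcal{A}\cdot x)_i=\lambda d_i^{-(k-1)}x_i^{k-1}=\lambda y_i^{k-1}$, that is $\mathcal{B}\cdot y=\lambda y^{[k-1]}$. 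Hence every eigenvalue of $\mathcal{A}$ is an eigenvalue of $\mathcal{B}$; applying the same argument to $\mathcal{A}=(D^{-1})^{-(k-1)}\cdot\mathcal{B}\cdot(D^{-1})$ gives the reverse inclusion. Thus $\mathcal{A}$ and $\mathcal{B}$ have the same set of eigenvalues, and in particular $\rho(\mathcal{A})=\rho(\mathcal{B})$.

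For the claim about multiplicities I would pass to characteristic polynomials. Using $D^{-(k-1)}\cdot\mathcal{I}\cdot D=\mathcal{I}$ and the fact that left multiplication by the order-$2$ tensor $D^{-(k-1)}$ and right multiplication by $D$ are linear in the order-$k$ argument, one gets $\mathcal{B}-\lambda\mathcal{I}=D^{-(k-1)}\cdot(\mathcal{A}-\lambda\mathcal{I})\cdot D$ for every $\lambda$. Taking the tensor determinant (resultant) of both sides and invoking the multiplicativity of $\det$ under left and right multiplication by nonsingular matrices from \cite{shao2012}, I obtain $\det(\mathcal{B}-\lambda\mathcal{I})=c\cdot\det(\mathcal{A}-\lambda\mathcal{I})$, where $c$ is a fixed power of $\det D$, hence a nonzero constant independent of $\lambda$. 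Since both characteristic polynomials are monic of the same degree, $c=1$, the polynomials coincide, and so $\mathcal{A}$ and $\mathcal{B}$ have the same eigenvalues with the same multiplicities.

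I expect the only genuine obstacle to lie in the last paragraph: it relies on the determinant/resultant formalism for tensors and on the precise behaviour of that determinant under the general product, so in a self-contained write-up I would simply quote the relevant determinant identity from \cite{shao2012} rather than reprove it. Everything else — the entrywise formula for $\mathcal{B}$ and the substitution $y=D^{-1}x$ — is an elementary index manipulation, and it already delivers the part of the statement (equality of spectral radii) that the applications in this paper actually use.
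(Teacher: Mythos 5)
The paper offers no proof of this lemma: it is quoted verbatim from \cite{shao2012} as a known result, so there is no internal argument to compare yours against. Your proposal is correct and is essentially the argument given in that reference. The entry formula $b_{ii_2\cdots i_k}=d_i^{-(k-1)}a_{ii_2\cdots i_k}d_{i_2}\cdots d_{i_k}$ follows from the definition of the general product exactly as you say, and the substitution $y=D^{-1}x$ then gives a bijection between eigenpairs of $\mathcal{A}$ and of $\mathcal{B}$ with the same eigenvalue, which already yields equality of the eigenvalue sets and hence of the spectral radii --- the only part of the lemma this paper actually uses (in the proofs of Theorems 3.1 and 3.4). For the multiplicity claim you are right that the eigenvector correspondence is not enough, since multiplicities of tensor eigenvalues are defined through the characteristic polynomial $\det(\lambda\mathcal{I}-\mathcal{T})$; your reduction via $\lambda\mathcal{I}-\mathcal{B}=D^{-(k-1)}\cdot(\lambda\mathcal{I}-\mathcal{A})\cdot D$ (using $D^{-(k-1)}\cdot\mathcal{I}\cdot D=\mathcal{I}$ and linearity of the product in the middle factor) together with the multiplicativity of the tensor determinant under one-sided products with nonsingular matrices is precisely how \cite{shao2012} proves that similar tensors share their characteristic polynomial, and your observation that the constant $c$ must equal $1$ by comparing leading coefficients is a clean way to avoid tracking the exact exponents of $\det D$. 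Quoting the determinant identity rather than reproving it is reasonable here, and is the only external ingredient your write-up would need.
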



For convenience, if $n$ and $k$ are two integers,
$$ \binom{n}{k}=\left\{ \begin{array}{ll}
\frac{n!}{k!(n-k)!},& \mbox {if $n\ge k\ge 0$}\\
0 & \mbox{ if $n<k$ or $k<0$.}
\end{array}\right.$$

 The following identity equality is known.
\begin{lemma} (\cite{lovasz2003})\label{id} Let $n> s\ge 3$ and $kge 2$ be three positive integers. Then
\begin{equation}\label{A1=A3}
\sum_{r=0}^{k-2}\binom{s-3}{k-r-2}\binom{n-s+1}{r}=\sum_{p=1}^{k-1}\binom{s-2}{p-1}\binom{n-p}{k-p-1}=\binom{n-2}{k-2}.
\end{equation}

\end{lemma}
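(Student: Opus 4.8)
The plan is to prove the combinatorial identity in Lemma~\ref{id} by a double-counting argument together with the Vandermonde–Chu convolution, breaking the claimed chain of equalities into two separate identities.

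First I would establish the right-hand equality $\sum_{p=1}^{k-1}\binom{s-2}{p-1}\binom{n-p}{k-p-1}=\binom{n-2}{k-2}$ combinatorially. Fix a ground set of $n-2$ elements, split as a distinguished block $B$ of size $s-2$ and its complement $B'$ of size $n-s$; I count $(k-2)$-subsets $T$ of the ground set. Stratify according to $q:=|T\cap B|$. Then the count is $\sum_{q\ge 0}\binom{s-2}{q}\binom{n-s}{k-2-q}$, which by Vandermonde equals $\binom{n-2}{k-2}$. To match the stated form one reindexes with $p=q+1$ (so the sum runs $p=1$ to $k-1$) and rewrites $\binom{n-s}{k-2-q}=\binom{n-s}{(n-s)-(k-2-q)}$; a short bijection or a telescoping manipulation converts $\binom{n-s}{k-1-p}$ into $\binom{n-p}{k-p-1}$ — actually the cleanest route is to prove the middle-to-right equality directly by induction on $k$ using Pascal's rule $\binom{n-p}{k-p-1}=\binom{n-p-1}{k-p-1}+\binom{n-p-1}{k-p-2}$, which makes the sum telescope. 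I would present whichever of these two is shorter; the Vandermonde route is conceptually cleanest and I would lead with it.

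Next I would handle the left-hand equality $\sum_{r=0}^{k-2}\binom{s-3}{k-r-2}\binom{n-s+1}{r}=\binom{n-2}{k-2}$, which is again pure Vandermonde: it counts $(k-2)$-subsets of an $(n-2)$-element ground set partitioned into a block of size $s-3$ and a block of size $n-s+1$, stratified by how many elements land in the second block ($=r$). Since $(s-3)+(n-s+1)=n-2$, Vandermonde gives exactly $\binom{n-2}{k-2}$. The hypotheses $n>s\ge 3$ and $k\ge 2$ guarantee the binomial arguments make sense (and the convention $\binom{n}{k}=0$ for out-of-range arguments absorbs the boundary terms, so no case analysis on the range of the summation index is needed).

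I do not anticipate a serious obstacle: both outer expressions are standard Vandermonde convolutions once the ground set is split the right way, so the whole lemma reduces to two applications of Chu–Vandermonde plus bookkeeping with the zero-convention for binomials. The only mildly delicate point is reconciling the three different index ranges ($r$ from $0$ to $k-2$; $p$ from $1$ to $k-1$) with a single uniform count of $\binom{n-2}{k-2}$; I would resolve this by the explicit reindexing substitutions above and by checking that the extra or missing boundary terms are all zero under the stated conventions. Since the paper cites \cite{lovasz2003} for this identity, I would keep the write-up brief — a two-line Vandermonde argument for each equality and a sentence on the reindexing.
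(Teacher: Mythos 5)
The paper gives no proof of this lemma at all---it is quoted from \cite{lovasz2003}---so the only question is whether your argument is sound, and it is not: the middle expression, as printed, is simply not equal to $\binom{n-2}{k-2}$, and the step where you promise that ``a short bijection or a telescoping manipulation converts $\binom{n-s}{k-1-p}$ into $\binom{n-p}{k-p-1}$'' is exactly where your proof breaks. Your Vandermonde count with the block of size $s-2$ correctly yields
\[
\sum_{p=1}^{k-1}\binom{s-2}{p-1}\binom{n-s}{k-p-1}=\binom{n-2}{k-2},
\]
but $\binom{n-s}{k-p-1}$ and $\binom{n-p}{k-p-1}$ differ term by term, and the discrepancy does not cancel in the sum. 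Concretely, take $n=5$, $s=3$, $k=3$: the stated middle sum is $\binom{1}{0}\binom{4}{1}+\binom{1}{1}\binom{3}{0}=5$, while the left sum and the right side both equal $\binom{3}{1}=3$. So no reindexing, bijection, or Pascal-rule induction can establish the middle equality as written; the lemma contains a typo (almost certainly $\binom{n-p}{k-p-1}$ should read $\binom{n-s}{k-p-1}$, which is the form actually used in Case 2 of the proof of Theorem~\ref{main}, where the relevant sum is $\sum_{r=0}^{k-2}\binom{s-2}{r}\binom{n-s}{k-r-2}$). A correct write-up must either point out and repair this misprint or restrict itself to the corrected statement; asserting that the printed identity follows from Vandermonde ``plus bookkeeping'' is a genuine error, not mere hedging.

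Your treatment of the left-hand equality is fine: $(s-3)+(n-s+1)=n-2$, so $\sum_{r=0}^{k-2}\binom{s-3}{k-r-2}\binom{n-s+1}{r}=\binom{n-2}{k-2}$ is a direct Chu--Vandermonde convolution, and the zero convention for out-of-range binomials handles the boundary terms (e.g.\ the case $s=3$, where only the term $r=k-2$ survives). Had the middle expression carried $n-s$ rather than $n-p$ in its second factor, your two-line Vandermonde argument would settle the whole lemma.
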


\begin{lemma} If $n, k, s$ are three positive integers with  $n\ge s\ge 3$, then
\begin{equation}\label{A4=A2+}
\sum_{r=0}^{k-2}(r+1)\binom{s-1}{r+1}\binom{n-s}{k-r-2}=\sum_{r=0}^{k-2}(k-1-r)\binom{s-2}{k-r-1}\binom{n-s+1}{r}+\binom{n-2}{k-2}.
\end{equation}
\end{lemma}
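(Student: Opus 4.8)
The plan is to show that \emph{both} sides of \eqref{A4=A2+} are equal to $(s-1)\binom{n-2}{k-2}$, using nothing beyond the absorption identity for binomial coefficients and Vandermonde's convolution. So the whole argument is a short computation in two halves, followed by adding them up.

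First I would deal with the left-hand side. Applying the absorption identity $(r+1)\binom{s-1}{r+1}=(s-1)\binom{s-2}{r}$ to each summand, the left-hand side becomes $(s-1)\sum_{r=0}^{k-2}\binom{s-2}{r}\binom{n-s}{k-2-r}$. Since $\binom{s-2}{r}=0$ for $r<0$ and $\binom{n-s}{k-2-r}=0$ for $r>k-2$, the displayed range $0\le r\le k-2$ already contains every nonzero term of the full Vandermonde convolution, so $\sum_{r=0}^{k-2}\binom{s-2}{r}\binom{n-s}{k-2-r}=\binom{(s-2)+(n-s)}{k-2}=\binom{n-2}{k-2}$. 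Hence the left-hand side equals $(s-1)\binom{n-2}{k-2}$.

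Next I would treat the first sum on the right-hand side. Substituting $p=k-1-r$ rewrites it as $\sum_{p=1}^{k-1}p\,\binom{s-2}{p}\binom{n-s+1}{k-1-p}$; using $p\binom{s-2}{p}=(s-2)\binom{s-3}{p-1}$ and then shifting the index by $q=p-1$ gives $(s-2)\sum_{q=0}^{k-2}\binom{s-3}{q}\binom{n-s+1}{k-2-q}$, which by Vandermonde's convolution (again the stated range captures all nonzero terms, as $n\ge s\ge 3$ keeps the upper arguments nonnegative) equals $(s-2)\binom{n-2}{k-2}$. Alternatively this is exactly the left equality of \eqref{A1=A3} in Lemma \ref{id} after the same change of variables, so one could simply cite it. Adding the extra term $\binom{n-2}{k-2}$ then yields $(s-2)\binom{n-2}{k-2}+\binom{n-2}{k-2}=(s-1)\binom{n-2}{k-2}$ for the right-hand side, matching the left-hand side and proving \eqref{A4=A2+}.

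There is essentially no obstacle here beyond bookkeeping. The only point that needs a touch of care is justifying that each truncated sum ranges over precisely the indices for which the binomial product is nonzero, so that Vandermonde's identity applies verbatim under the convention $\binom{a}{b}=0$ for $b<0$ or $a<b$. The degenerate cases $k=1$ and $k=2$, where the sums are empty or a single term, can be verified directly and are in any case consistent with these conventions.
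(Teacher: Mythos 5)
Your proof is correct, and it takes a genuinely different route from the paper's. The paper differentiates the two factorizations $(1+x)^{n-1}=(1+x)^{s-1}(1+x)^{n-s}$ and $(1+x)^{n-1}=(1+x)^{s-2}(1+x)(1+x)^{n-s}$, extracts the coefficient of $x^{k-2}$ from each to obtain two expressions for $(k-1)\binom{n-1}{k-1}$, and subtracts; the common term $\sum_{r}(k-1-r)\binom{s-1}{r}\binom{n-s}{k-1-r}$ cancels and leaves exactly \eqref{A4=A2+}. You instead evaluate each side in closed form: absorption plus Vandermonde gives $(s-1)\binom{n-2}{k-2}$ for the left side, and the same two steps (or, as you note, a direct appeal to the first equality in \eqref{A1=A3}) give $(s-2)\binom{n-2}{k-2}$ for the first sum on the right, so both sides equal $(s-1)\binom{n-2}{k-2}$. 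Your argument is shorter, avoids calculus entirely, and yields strictly more information --- the common value of the two sides --- whereas the paper's subtraction argument never identifies that value; the generating-function method has the mild advantage of producing both of the paper's auxiliary identities \eqref{A3A4-e3} and \eqref{A3A4-e5} from one template, but for this lemma alone your computation is the cleaner one. Your attention to the truncation ranges (checking that the displayed limits already contain every nonzero term of the full convolution) and to the degenerate cases $k=1,2$ is exactly the care the argument needs.
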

\begin{proof}
Since $(1+x)^{n-1}=(1+x)^{s-1}(1+x)^{n-s}, $ we differentiate both side with respect to $x$ and have
\begin{equation}\label{A3A4-e2}
\sum_{r=1}^{n-1}r\binom{n-1}{r}x^{r-1}=\sum_{r=1}^{s-1}r\binom{s-1}{r}x^{r-1}\sum_{r=0}^{n-s}\binom{n-s}{r}x^{r}
+\sum_{r=0}^{s-1}\binom{s-1}{r}x^{r}\sum_{r=1}^{n-s}r\binom{n-s}{r}x^{r-1}.
\end{equation}
Hence considering the coefficients of $x^{k-2}$ of both side of equation (\ref{A3A4-e2}), we have
\begin{equation}\label{A3A4-e3}(k-1)\binom{n-1}{k-1}=\sum_{r=0}^{k-2}(r+1)\binom{s-1}{r+1}\binom{n-s}{k-2-r}+\sum_{r=0}^{k-2}(k-1-r)\binom{s-1}{r}
\binom{n-s}{k-1-r}.\end{equation}
On the other hand,
Since $(1+x)^{n-1}=(1+x)^{s-2}(1+x)(1+x)^{n-s}, $  we differentiate both side with respect to $x$ and have
\begin{equation}\label{A3A4-e4}
\left[(1+1)^{n-1}\right]^{\prime}= \left[(1+x)^{s-2}\right]^{\prime}(1+x)^{n-s+1}+(1+x)^{s-1}\left[(1+x)^{n-s}\right]^{\prime}+(1+x)(1+x)^{n-2}.
\end{equation}
Then considering the coefficients of $x^{k-2}$ both side of equation (\ref{A3A4-e4}), we have
\begin{eqnarray}\label{A3A4-e5}
(k-1)\binom{n-1}{k-1}&=&\sum_{r=0}^{k-2}(k-r-1)\binom{s-2}{k-r-1}\binom{n-s+1}{r}\nonumber\\
&&+\sum_{r=0}^{k-2}(k-r-1)\binom{s-1}{r}\binom{n-s}{k-r-1}+\binom{n-2}{k-2}.
\end{eqnarray}
By (\ref{A3A4-e3}) and (\ref{A3A4-e5}), it is easy to see that (\ref{A3A4-e2}) holds.
\end{proof}
\section{Main Results}
In this section, we present the main result of this paper as follows.
\begin{theorem}\label{main}
Let $\mathcal{H}$ be a $k-$uniform hypergraph with degree sequence $d_1\ge d_2\ge\cdots\ge d_n$.  Denote $A_1=\frac{1}{k-1}\binom{n-2}{k-2}$, $A_2=\sum_{r=0}^{k-2}\frac{r+1}{k-1}\binom{s-1}{r+1}\binom{n-s}{k-r-2}$, $\Delta=(d_s+sA_1-A_2)^2+4A_1\sum_{t=1}^{s-1}(d_t-d_s)$ and $\phi_s=\frac{d_s-A_2+(s-2)A_1+\sqrt{\Delta}}{2} $, for $1\le s\le n$.
Then
\begin{equation}\label{upper-e1}
\rho(\mathcal{H})\le \min_{1\le s\le n}\left\{ \phi_s=\frac{d_s-A_2+(s-2)A_1+\sqrt{\Delta}}{2}\right
\}.\end{equation}

\end{theorem}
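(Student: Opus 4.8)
The plan is to mimic the classical Kelmans–Hong type argument for graph spectral radii, adapted to the tensor setting via the diagonal similarity of Lemma~\ref{diag-sim}. Fix $s$ with $1\le s\le n$. Let $x=(x_1,\dots,x_n)^T$ be a nonnegative eigenvector of $\mathcal{A}(\mathcal{H})$ corresponding to $\rho=\rho(\mathcal{H})$, which exists since the adjacency tensor is nonnegative. First I would order the coordinates so that the largest eigenvector entries sit on the $s$ vertices of smallest\,/\,largest degree as needed, and write the eigenvalue equations $\rho x_i^{k-1}=\sum_{\{i,i_2,\dots,i_k\}\in E}x_{i_2}\cdots x_{i_k}$ for each $i$. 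The quantity $\binom{n-2}{k-2}$ is exactly the number of $(k-1)$-subsets of $V\setminus\{i\}$ containing a fixed second vertex, i.e.\ the maximum possible number of edges through a given pair; the constants $A_1,A_2$ and the sums appearing in $\Delta$ are the bookkeeping for counting, over all edges incident to a vertex, how many also meet the distinguished set $S=\{1,\dots,s-1\}$ or $\{s,\dots\}$. Lemma~\ref{id} and the just-proved binomial identity~(\ref{A4=A2+}) will be used precisely to collapse those double sums into the closed forms $A_1$, $A_2$, $\binom{n-2}{k-2}$.

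The core step is the following estimate. Let $x_s=\max\{x_i : i\ge s\}$ (or the appropriate extremal entry), and for $t<s$ bound $x_t\le$ (something controlled). Applying the eigenvalue equation at a carefully chosen vertex and using the AM–GM / monotonicity bound $x_{i_2}\cdots x_{i_k}\le$ a product in which all but possibly one factor is replaced by the relevant maximum, one gets two inequalities: one of the form $\rho\,x_s^{k-1}\le (d_s - A_2 + (s-2)A_1)x_s^{k-1} + A_1\sum_{t=1}^{s-1}x_t^{k-1}$-type relation, and a companion relation controlling $\sum_{t<s}x_t^{k-1}$ in terms of $\rho$, $x_s^{k-1}$, and $d_t$. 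Eliminating the auxiliary sum $\sum_{t=1}^{s-1}x_t^{k-1}$ between the two inequalities yields a quadratic inequality in $\rho$, namely $\rho^2 - (d_s - A_2 + (s-2)A_1)\rho - A_1\sum_{t=1}^{s-1}(d_t-d_s)\le 0$ (possibly after clearing $x_s^{k-1}>0$), whose larger root is exactly $\phi_s$. Solving the quadratic gives $\rho\le\phi_s$, and since $s$ was arbitrary we may take the minimum over $s$, proving~(\ref{upper-e1}).

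Concretely, the steps in order are: (1) pick the Perron eigenvector $x\ge 0$ of $\mathcal{A}(\mathcal{H})$ with eigenvalue $\rho$, normalized and with coordinates sorted compatibly with the degree order; (2) for each vertex write out $\rho x_i^{k-1}=\sum_{e\ni i}\prod_{j\in e\setminus\{i\}}x_j$ and split the edge set through $i$ according to how the remaining $k-1$ vertices of $e$ distribute between $S=\{1,\dots,s-1\}$ and its complement; (3) bound each product monomial by replacing factors with $x_s^{k-1}$ except for at most one factor from $S$, and use~(\ref{A1=A3}) and~(\ref{A4=A2+}) to evaluate the resulting counting coefficients as $A_1$, $A_2$; (4) obtain the two scalar inequalities, one "at $s$" and one summing over $t\in S$; (5) eliminate $\sum_{t\in S}x_t^{k-1}$ to reach the quadratic $\rho^2-(d_s-A_2+(s-2)A_1)\rho-A_1\sum_{t=1}^{s-1}(d_t-d_s)\le 0$; (6) conclude $\rho\le\phi_s$ and minimize over $s$.

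The main obstacle I anticipate is step~(3): getting the combinatorial coefficients to come out exactly as $A_1$ and $A_2$ rather than as messy over-counts. One must be careful that when an edge $e$ through vertex $i$ contains several vertices of $S$, the monomial $\prod_{j\in e\setminus\{i\}}x_j$ is bounded using the product of the \emph{large} entries correctly — the natural move is to keep exactly one $x_t$ with $t\in S$ free and bound the other $k-2$ factors by $x_s$, but then one must count edges through $i$ weighted by $|e\cap S|$ (or $|e\cap S|-1$), which is where the differentiated binomial identity~(\ref{A4=A2+}) enters, versus the plain identity~(\ref{A1=A3}) for the unweighted count. Reconciling these two counts so that the $\binom{n-2}{k-2}$ terms cancel and leave precisely $(s-2)A_1$ and $-A_2$ as the coefficient of $x_s^{k-1}$ is the delicate part; the rest is a routine quadratic solve. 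A secondary point to watch is the degenerate cases $s=1$ and $s=n$ and small $k$, where some binomial coefficients vanish and the sums telescope trivially — these should be checked separately to confirm the stated formula for $\phi_s$ still holds.
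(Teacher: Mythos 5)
Your overall architecture (split the edges through a vertex according to their intersection with $S=\{1,\dots,s-1\}$, use the binomial identities to collapse the counts into $A_1,A_2$, and finish with a quadratic whose larger root is $\phi_s$) matches the paper, but the engine you propose to drive it --- the Perron eigenvector --- creates a genuine gap that the paper deliberately avoids. In your step (1) you ask for the eigenvector coordinates to be ``sorted compatibly with the degree order''; this cannot be arranged, because the ordering $d_1\ge\cdots\ge d_n$ is fixed by the hypothesis while the Perron entries of $\mathcal{A}(\mathcal{H})$ need not respect it. Every subsequent estimate in your sketch ($x_j\le x_s$ for $j\ge s$, ``keep one free factor from $S$'') silently relies on that compatibility, so the two scalar inequalities of step (4), and hence the elimination in step (5), are not actually established. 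The paper never touches the eigenvector: it \emph{chooses} weights $x_t^{k-1}=1+\frac{d_t-d_s}{A_1+\phi_s}$ for $t<s$ and $x_t=1$ for $t\ge s$ (so the weights are defined by the degrees, are automatically ordered like them, and are all at least $1$), forms $\mathcal{B}=D^{-(k-1)}\cdot\mathcal{A}\cdot D$ via Lemma~\ref{diag-sim}, and bounds $\rho(\mathcal{H})=\rho(\mathcal{B})$ by $\max_j R_j(\mathcal{B})$ using Lemma~\ref{max-min}. With that choice, verifying $R_j(\mathcal{B})\le\phi_s$ for every $j$ is a direct computation and no unknown quantity has to be eliminated.

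A second concrete problem is your step (3), which you yourself flag as the delicate point. Bounding $x_{i_2}\cdots x_{i_k}$ by replacing all but one factor with a maximum produces integer counting coefficients and cannot generate the factor $\frac{1}{k-1}$ sitting inside $A_1=\frac{1}{k-1}\binom{n-2}{k-2}$ and $A_2$. The paper instead applies the AM--GM inequality $x_{i_2}\cdots x_{i_k}\le\frac{x_{i_2}^{k-1}+\cdots+x_{i_k}^{k-1}}{k-1}$ and then enlarges the sum from the edges through $j$ to \emph{all} $(k-1)$-subsets of each type $I_r$ --- a step that is legitimate only because each summand $\frac{x_{i_2}^{k-1}+\cdots+x_{i_k}^{k-1}}{k-1}-1$ is nonnegative, which again uses that all the chosen weights are at least $1$. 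Only after that enlargement do Lemma~\ref{id} and identity (\ref{A4=A2+}) apply to produce $A_1$ and $A_2$ exactly. Your closing remark that $s=1$ and $s=2$ need separate treatment is correct (the paper handles them via $\phi_1=d_1$ and a one-variable choice of $x_1$), but as written the core of your argument for $s\ge3$ does not go through.
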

\begin{proof}  We first consider $s\ge 3$.
Let  $\mathcal{A}$ be the adjacency matrix of $\mathcal{H}$ and $D=diag(x_1, \cdots, x_n)$ be the diagonal matrix with $x_s=\cdots =x_n=1$.
Let $\mathcal{B}=D^{-(k-1)}\cdot\mathcal{A}\cdot D=(b_{i_1\cdots i_k})$. By Lemma~\ref{diag-sim},  $\rho(\mathcal{H})=\rho(\mathcal{A})=\rho(\mathcal{B})$.  For $0\le r\le n$, denote
$I_r=\{ \{ i_2, \cdots, i_k \} \ | \  1\le i_2, \cdots, i_k\le n, \mbox{ there are  exactly }\ r\ \mbox{elements in }\  \{i_2, \cdots, i_k\}$  with at least $\  s \}$.  The sum of row $j$ in the tensor $\mathcal{B}$ is denoted by $R_j(\mathcal{B})$ for $1\le j\le n$.
  We consider the following two cases

 {\bf Case 1:} $1\le j\le s-1$.  By (\ref{A1=A3}) in Lemma~\ref{id}, $x_s=\cdots=x_n=1$, we have

\begin{eqnarray*}
 R_j(\mathcal{B})&=& \sum_{i_2,\cdots, i_k=1}^{n}b_{ji_2\cdots i_k}\\
 &=&\frac{1}{x_j^{k-1}} \sum_{i_2,\cdots, i_k=1}^{n}a_{ji_2\cdots i_k}x_{i_2}\cdots x_{i_k}\\
 &=& \frac{1}{x_j^{k-1}} \sum_{i_2,\cdots, i_k=1}^{n}a_{ji_2\cdots i_k}x_{i_2}\cdots x_{i_k}\\
 &=&  \frac{1}{x_j^{k-1}}\left\{ \sum_{r=0}^{k-2}\sum_{\{j,i_2,\cdots, i_k\}\in E(\mathcal{H}), \{j_2, \cdots, j_k\}\in I_r}x_{i_2}\cdots x_{i_k}+
 \sum_{\{j, i_2, \cdots, j_k\}\in E(\mathcal{H}), \{j_2, \cdots, j_k\}\in I_{k-1}}1\right\}\\
  &=& \frac{1}{x_j^{k-1}}\left\{ \sum_{r=0}^{k-2}\sum_{\{j,i_2,\cdots, i_k\}\in E(\mathcal{H}), \{j_2, \cdots, j_k\}\in I_r}(x_{i_2}\cdots x_{i_k}-1)+
 d_j\right\}\\
  \end{eqnarray*}
 \begin{eqnarray*}
  &\le& \frac{1}{x_j^{k-1}}\left\{ \sum_{r=0}^{k-2}\sum_{\{j,i_2,\cdots, i_k\}\in E(\mathcal{H}), \{j_2, \cdots, j_k\}\in I_r}\left(\frac{x_{i_2}^{k-1}+\cdots+x_{i_k}^{k-1}}{k-1}-1\right)+
 d_j\right\}\\
 &\le& \frac{1}{x_j^{k-1}}\left\{ \sum_{r=0}^{k-2}\sum_{ \{j_2, \cdots, j_k\}\in I_r}\left(\frac{x_{i_2}^{k-1}+\cdots+x_{i_k}^{k-1}}{k-1}-1\right)+
 d_j\right\}\\
 &=& \frac{1}{x_j^{k-1}}\left\{ d_j+\frac{1}{k-1}\sum_{r=0}^{k-2}\binom{s-3}{k-r-2}\binom{n-s+1}{r}\left[\sum_{t=1}^{s-1}x_t^{k-1}-x_j^{k-1}\right]\right.\\
 &&+\frac{1}{k-1}\sum_{r=0}^{k-2}\binom{s-2}{k-r-1}\binom{n-s}{r-1}\sum_{t=s}^nx_t^{k-1}-\left.\sum_{r=0}^{k-2}\binom{s-2}{k-r-1}\binom{n-s+1}{r}\right\} \\
 &=& \frac{1}{x_j^{k-1}}\left\{ d_j+\frac{1}{k-1}\binom{n-2}{k-2}\left[\sum_{t=1}^{s-1}x_t^{k-1}-x_j^{k-1}\right]\right.\\
 &&-\left.\frac{1}{k-1}\sum_{r=0}^{k-2}\frac{k-1-r}{k-1}\binom{s-2}{k-r-1}\binom{n-s+1}{r-1}\right\}. \\
  &=& \frac{1}{x_j^{k-1}}\left\{ d_j+A_1\left[\sum_{t=1}^{s-1}x_t^{k-1}-x_j^{k-1}\right]-A_2+A_1\right\}. \\
 \end{eqnarray*}
 {\bf Case 2:} $s\le j\le n$.  By (\ref{A1=A3}) in Lemma~\ref{id}, $x_s=\cdots=x_n=1$, we have

 \begin{eqnarray*}
 R_j(\mathcal{B}) &=& \sum_{i_2,\cdots, i_k=1}^{n}b_{ji_2\cdots i_k}\\
 &=& \sum_{i_2,\cdots, i_k=1}^{n}a_{ji_2\cdots i_k}x_{i_2}\cdots x_{i_k}\\
 &=& \sum_{i_2,\cdots, i_k=1}^{n}a_{ji_2\cdots i_k}x_{i_2}\cdots x_{i_k}\\
 &=&   \sum_{r=0}^{k-2}\sum_{\{j,i_2,\cdots, i_k\}\in E(\mathcal{H}), \{j_2, \cdots, j_k\}\in I_r}x_{i_2}\cdots x_{i_k}+
 \sum_{\{j, i_2, \cdots, j_k\}\in E(\mathcal{H}), \{j_2, \cdots, j_k\}\in I_{k-1}}1\\
  &=& d_j+ \sum_{r=0}^{k-2}\sum_{\{j,i_2,\cdots, i_k\}\in E(\mathcal{H}), \{j_2, \cdots, j_k\}\in I_r}(x_{i_2}\cdots x_{i_k}-1)
 \\
 \end{eqnarray*}
 \begin{eqnarray*}
  &\le& d_s+ \sum_{r=0}^{k-2}\sum_{\{j,i_2,\cdots, i_k\}\in E(\mathcal{H}), \{j_2, \cdots, j_k\}\in I_r}\left(\frac{x_{i_2}^{k-1}+\cdots+x_{i_k}^{k-1}}{k-1}-1\right)\\
  &=&  d_s+\frac{1}{k-1}\sum_{r=0}^{k-2}\binom{s-2}{r}\binom{n-s}{k-r-2}\sum_{t=1}^{s-1}x_t^{k-1}\\
 &&+\frac{1}{k-1}\sum_{r=0}^{k-2}\binom{s-1}{r+1}\binom{n-s-1}{k-r-3}\left[\sum_{t=s}^nx_t^{k-1}-x_j^{k-1}\right]-
 \sum_{r=0}^{k-2}\binom{s-1}{r+1}\binom{n-s}{k-r-2} \\
 &=&  d_s+\frac{1}{k-1}\binom{n-2}{k-2}\sum_{t=1}^{s-1}x_t^{k-1}-\sum_{r=0}^{k-2}\frac{r+1}{k-1}\binom{s-1}{r+1}\binom{n-s}{k-r-2}. \\
 &=&  d_s+A_1\sum_{t=1}^{s-1}x_t^{k-1}-A_2. \\
      \end{eqnarray*}

     Let $x_t^{k-1}=1+\frac{d_t-d_s}{A_1+\phi_s}$. By the definition of $\phi_s$,  it is easy to see that
     $$\phi_s^2-[d_s-A_2+(s-2)A_1]\phi_s-A_1[(d_s-A_2+(s-1)A_1+\sum_{t=1}^{s-1}(d_t-d_s)]=0.$$
     Hence  $d_s+A_1\sum_{t=1}^{s-1}x_t^{k-1}-A_2=\phi_s$, which implies that $R_j(\mathcal{B})\le \phi_s$ for $s\le j\le n$.
     Moreover, for  $1\le j\le s-1$,  we have
     $$\sum_{t=1}^{s-1}x_t^{k-1}=\frac{\phi_s-d_s+A_2}{A_1}$$
     and
     \begin{eqnarray*}
R_j(\mathcal{B})&\le & \frac{1}{x_j^{k-1}}\left\{ d_j+A_1\left[\sum_{t=1}^{s-1}x_t^{k-1}-x_j^{k-1}\right]-A_2+A_1\right\}\\
&=& \frac{1}{1+\frac{d_j-d_s}{A_1+\phi_s}}\left\{ d_j+\phi_s-d_s+A_2-A_1\left(1+\frac{d_j-d_s}{A_1+\phi_s}\right)-A_2+A_1\right\}\\
&=&\phi_s.
\end{eqnarray*}
Hence by Lemma~\ref{max-min}, we have $\rho(\mathcal{B})\le \phi_s$ for $3\le s\le n$.

  If $s=1$, then it is easy to see that $\phi_1=d_1$. By \cite{cooper2012},  we have $\rho(\mathcal{H})\le d_1=\phi_1$.

  If $s=2$, then  by the same argument as $s\ge 3$, it is easy to see that
  \begin{equation}\label{s2-e1}
  R_1(\mathcal{B})\le \frac{d_1}{x_1^{k-1}}
  \end{equation}
  and
  \begin{equation}\label{s2-e2}
  R_j(\mathcal{H})\le d_2+A_1x_1^{k-1}-A_1, \mbox{for}\ j=2, \cdots, n.
  \end{equation}
  Let $x_1^{k-1}=\frac{-(d_2-A_1)+\sqrt{(d_2-A_1)^2+4A_1d_1}}{2A_1}$. Then
  $$R_1(\mathcal{B})\le \frac{d_1}{x_1^{k-1}}=\phi_2$$
  and $$R_j(\mathcal{H})\le d_2+A_1x_1^{k-1}-A_1=\phi_2,\  \mbox{for} \ j=2, \cdots, n.$$
Hence $\rho(\mathcal{H})\le \phi_2$.
 Therefore,
$\rho(\mathcal{H})\le \phi_s$ for $s=1, \cdots, n$.
\end{proof}
{\bf Remark}  The The sequence $\phi_1, \cdots, \phi_n$ is not necessarily non-increasing.   In particular, we are able to get an upper bound
in terms of  then minimum degree $\delta$ and the size of edges.
\begin{corollary}
Let $\mathcal{H}$ be a $k-$uniform hypergraph with the minimum degree $\delta$ and the size $m$ of edges. Then
$$\rho(\mathcal{H})\le \frac{\delta+\frac{n-2}{k-1}\binom{n-2}{k-2}-\binom{n-1}{k-2}+\sqrt{\left(\delta+\frac{n}{k-1}\binom{n-2}{k-2}-\binom{n-1}{k-2}\right)^2+\frac{4}{k-1}
\binom{n-2}{k-2}(km-n\delta)}}{2}.$$
 \end{corollary}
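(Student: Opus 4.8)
The plan is to obtain the corollary as the single instance $s=n$ of Theorem~\ref{main}. Since $d_1\ge\cdots\ge d_n$, the choice $s=n$ makes $d_s=d_n=\delta$ the minimum degree, and because (\ref{upper-e1}) bounds $\rho(\mathcal{H})$ by $\min_{1\le s\le n}\phi_s$, we get in particular $\rho(\mathcal{H})\le\phi_n$. So the entire task is to evaluate $\phi_n$ in terms of $n$, $k$, $\delta$ and $m$ and to check that it is exactly the displayed expression.

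Two elementary reductions do the job. First, the term $\sum_{t=1}^{s-1}(d_t-d_s)$ inside $\Delta$ becomes, at $s=n$, $\sum_{t=1}^{n-1}(d_t-\delta)=\sum_{t=1}^{n}(d_t-\delta)=\big(\sum_{t=1}^n d_t\big)-n\delta$, and the handshake identity for a $k$-uniform hypergraph gives $\sum_{t=1}^n d_t=km$ (each of the $m$ edges has exactly $k$ vertices), so this sum equals $km-n\delta$; this is the source of the term $\frac{4}{k-1}\binom{n-2}{k-2}(km-n\delta)=4A_1(km-n\delta)$ under the square root. Second, in $A_2=\sum_{r=0}^{k-2}\frac{r+1}{k-1}\binom{s-1}{r+1}\binom{n-s}{k-r-2}$ the factor $\binom{n-s}{k-r-2}=\binom{0}{k-r-2}$ vanishes under the stated convention unless $k-r-2=0$, i.e. $r=k-2$, where it equals $\binom{0}{0}=1$; hence only one summand survives and $A_2$ collapses to a closed form in $n$ and $k$. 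Substituting this, together with $A_1=\frac{1}{k-1}\binom{n-2}{k-2}$ and $(s-2)A_1=(n-2)A_1$, into $\phi_s=\tfrac12\big(d_s-A_2+(s-2)A_1+\sqrt{\Delta}\big)$ with $\Delta=(d_s+sA_1-A_2)^2+4A_1\sum_{t=1}^{s-1}(d_t-d_s)$ evaluated at $s=n$, and simplifying the binomials, yields the asserted bound. A convenient sanity check is the $d$-regular case, where $km=n\delta$ forces the right-hand side to collapse to $\delta$, which is indeed $\rho$ of a regular hypergraph; this both pins down the surviving constant and guards against arithmetic slips.

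There is no genuine obstacle here beyond careful bookkeeping: the two points that need attention are (i) applying the binomial-coefficient convention so that the degenerate factors $\binom{0}{\,\cdot\,}$ are correctly read as $0$ or $1$, and (ii) keeping the handshake substitution $\sum_t d_t=km$ straight through the rearrangement. One should also record that $s=n$ is admissible only when $n\ge 3$, so that the $s\ge 3$ branch in the proof of Theorem~\ref{main} applies; the remaining cases $n\le 2$ are degenerate and are covered by the $s=1,2$ estimates already established there.
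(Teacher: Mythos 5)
Your proposal is correct and follows exactly the paper's route: the paper's entire proof is the one-line observation that the bound is $\phi_n$ from Theorem~\ref{main}, and you simply supply the bookkeeping (the handshake identity $\sum_t d_t=km$ and the collapse of $A_2$ at $s=n$) that the paper leaves implicit. One worthwhile byproduct of your explicit evaluation: at $s=n$ the surviving term is $A_2=\binom{n-1}{k-1}$, and your own $d$-regular sanity check (which forces $\phi_n=\delta+(n-1)A_1-A_2=\delta$) confirms this, so the $\binom{n-1}{k-2}$ appearing in the printed statement should read $\binom{n-1}{k-1}$.
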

\begin{proof}
The assertion follows from $\rho(H)\le \phi_n$ in Theorem~\ref{main}.
\end{proof}

 Similarly, we are able to get an upper bound for the signless Laplacian spectral radius of $Q(\mathcal{H})$.
 \begin{theorem}\label{signless}
Let $\mathcal{H}$ be a $k-$uniform hypergraph with degree sequence $d_1\ge d_2\ge\cdots\ge d_n$ and
 $q(\mathcal{H})$ be the spectral radius of $Q(\mathcal{H})$. Denote
 $A_1=\frac{1}{k-1}\binom{n-2}{k-2}$, $A_2=\sum_{r=0}^{k-2}\frac{r+1}{k-1}\binom{s-1}{r+1}\binom{n-s}{k-r-2}$, $\Theta=(2d_s+sA_1-A_2)^2+8A_1\sum_{t=1}^{s-1}(d_t-d_s)$ and $\psi_s=\frac{2d_s-A_2+(s-2)A_1+\sqrt{\Theta}}{2} $, for $1\le s\le n$.
Then
\begin{equation}\label{upper-e1}
q(\mathcal{H})\le \min_{1\le s\le n}\left\{ \psi_s=\frac{2d_s-A_2+(s-2)A_1+\sqrt{\Theta}}{2}\right
\}.\end{equation}
\end{theorem}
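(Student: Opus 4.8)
The plan is to run the proof of Theorem~\ref{main} essentially word for word, the only new ingredient being the diagonal part of $\mathcal{Q}(\mathcal{H})=\mathcal{D}(\mathcal{H})+\mathcal{A}(\mathcal{H})$. Fix $s$ with $3\le s\le n$, let $D=\mathrm{diag}(x_1,\dots,x_n)$ with $x_s=\cdots=x_n=1$, and put $\mathcal{B}=D^{-(k-1)}\cdot\mathcal{A}(\mathcal{H})\cdot D$ and $\mathcal{B}_{\mathcal Q}=D^{-(k-1)}\cdot\mathcal{Q}(\mathcal{H})\cdot D$; by Lemma~\ref{diag-sim}, $q(\mathcal{H})=\rho(\mathcal{Q}(\mathcal{H}))=\rho(\mathcal{B}_{\mathcal Q})$. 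Since the entries transform exactly as in the proof of Theorem~\ref{main}, and the only nonzero entry of $\mathcal{D}(\mathcal{H})$ in row $j$ is $d_j$ at position $(j,\dots,j)$, which the scaling leaves fixed, while $\mathcal{A}(\mathcal{H})$ has zero diagonal, one obtains $R_j(\mathcal{B}_{\mathcal Q})=d_j+R_j(\mathcal{B})$ for every $j$. Hence the whole problem reduces to the two row--sum estimates for $\mathcal{B}$ already obtained in the proof of Theorem~\ref{main}.

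From those estimates: for $s\le j\le n$, Case~2 gives $R_j(\mathcal{B})\le d_s+A_1\sum_{t=1}^{s-1}x_t^{k-1}-A_2$, so adding $d_j\le d_s$ yields $R_j(\mathcal{B}_{\mathcal Q})\le 2d_s+A_1\sum_{t=1}^{s-1}x_t^{k-1}-A_2$; for $1\le j\le s-1$, Case~1 gives $R_j(\mathcal{B}_{\mathcal Q})\le d_j+x_j^{-(k-1)}\bigl\{d_j+A_1[\sum_{t=1}^{s-1}x_t^{k-1}-x_j^{k-1}]-A_2+A_1\bigr\}$. I would then take $x_t^{k-1}=1+\dfrac{2(d_t-d_s)}{A_1+\psi_s}$ for $1\le t\le s-1$ --- the analogue of the choice in Theorem~\ref{main}, the factor $2$ being forced by the extra diagonal term. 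Because $d_t\ge d_s$ for $t\le s-1$ one has $x_t^{k-1}\ge1$; this is what keeps the Case~1 estimate valid (there the sums over edges get replaced by sums over all index tuples, which requires the summands to be nonnegative) and, through $d_j\le d_jx_j^{k-1}$, lets one absorb the outer $d_j$ into the bracket. Unwinding the definition of $\psi_s$ --- equivalently, the identity $\bigl(2\psi_s-(2d_s-A_2+(s-2)A_1)\bigr)^2=\Theta$, i.e. that $\psi_s$ is the larger root of $\psi^{2}-(2d_s-A_2+(s-2)A_1)\psi-A_1[(2d_s-A_2)+(s-1)A_1+2\sum_{t=1}^{s-1}(d_t-d_s)]=0$ --- gives $\sum_{t=1}^{s-1}x_t^{k-1}=\dfrac{\psi_s-2d_s+A_2}{A_1}$, so the Case~2 bound equals $\psi_s$; substituting the same identity into the Case~1 bound, the terms should then collapse to $R_j(\mathcal{B}_{\mathcal Q})\le\psi_s$ as well. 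By Lemma~\ref{max-min} this gives $q(\mathcal{H})\le\psi_s$ for $3\le s\le n$.

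It remains to dispose of $s=1$ and $s=2$. For $s=1$ one has $A_2=0$ and $\psi_1=2d_1$, and $q(\mathcal{H})\le2d_1$ is immediate from Lemma~\ref{max-min} since every row sum of $\mathcal{Q}(\mathcal{H})$ equals $2d_j\le2d_1$. For $s=2$ the same argument runs with only $x_1$ free: $R_1(\mathcal{B}_{\mathcal Q})\le d_1+d_1x_1^{-(k-1)}$ and $R_j(\mathcal{B}_{\mathcal Q})\le2d_2+A_1x_1^{k-1}-A_1$ for $j\ge2$ (here $A_2=A_1$), and the choice $x_1^{k-1}=1+\tfrac{2(d_1-d_2)}{A_1+\psi_2}$ makes both equal to $\psi_2$. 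Taking the minimum over $s=1,\dots,n$ then gives (\ref{upper-e1}).

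The step I expect to cost the most is the Case~1 estimate. In Theorem~\ref{main} the substitution collapsed cleanly, but here an extra copy of the diagonal entry $d_j$ has been introduced into $R_j(\mathcal{B}_{\mathcal Q})$, and one must check with care that the chosen scaling still balances it against the common value $\psi_s$ --- this is exactly the calculation that pins down the precise shape of $\Theta$ (the $8A_1$ rather than $4A_1$, and $2d_s$ rather than $d_s$), and it is the only place where the ordering $d_1\ge\cdots\ge d_n$ is genuinely used, namely to guarantee $x_t^{k-1}\ge1$. All binomial identities needed in the two row--sum estimates are already in hand from Lemma~\ref{id} and from (\ref{A4=A2+}), so no new combinatorial input is required.
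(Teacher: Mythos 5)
Your proposal follows the paper's own route exactly (the paper reduces Theorem~\ref{signless} to ``the same argument as in Theorem~\ref{main}'' with the scaling $x_j^{k-1}=1+\frac{2(d_j-d_s)}{A_1+\psi_s}$), but the step you yourself single out as the delicate one --- the Case~1 rows $1\le j\le s-1$ --- does not close, and your assertion that the terms ``should then collapse to $R_j(\mathcal{B}_{\mathcal Q})\le\psi_s$'' is false. The problem is the one you half-noticed: the diagonal entry $d_j$ of $\mathcal{Q}(\mathcal{H})$ is invariant under the diagonal similarity, so it enters $R_j(\mathcal{B}_{\mathcal Q})$ with coefficient $1$ rather than $x_j^{-(k-1)}$, and it cannot be absorbed. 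Indeed, with $\sum_{t=1}^{s-1}x_t^{k-1}=\frac{\psi_s-2d_s+A_2}{A_1}$ a direct computation gives
\begin{equation*}
d_j+\frac{1}{x_j^{k-1}}\Bigl\{d_j+A_1\Bigl[\sum_{t=1}^{s-1}x_t^{k-1}-x_j^{k-1}\Bigr]-A_2+A_1\Bigr\}
=\psi_s+\frac{2d_j(d_j-d_s)}{(A_1+\psi_s)\,x_j^{k-1}},
\end{equation*}
which is strictly larger than $\psi_s$ whenever $d_j>d_s$. The same defect occurs in your $s=2$ case: $R_1(\mathcal{B}_{\mathcal Q})= d_1+d_1x_1^{-(k-1)}$ does not equal $\psi_2$ for your choice of $x_1$. (The choice that would balance row $j$ is $x_j^{k-1}=1+\frac{2(d_j-d_s)}{A_1+\psi_s-d_j}$, but then $\sum_t x_t^{k-1}$ no longer satisfies a quadratic in $\psi_s$, so the closed form of $\Theta$ is lost. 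Your Case~2 rows and the $s=1$ case are fine.)

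This is not merely a presentational gap: the statement itself fails. Take $k=2$ and $\mathcal{H}=K_{1,3}$, so $d=(3,1,1,1)$ and $A_1=1$; for $s=2$ one gets $A_2=1$, $\Theta=(2+2-1)^2+8\cdot 2=25$, hence $\psi_2=\frac{2-1+5}{2}=3$, while $q(K_{1,3})=4$. (For comparison, Theorem~\ref{main} gives $\phi_2=\sqrt3=\rho(K_{1,3})$ here, which is exactly why the adjacency argument survives: there the entire row bound carries the prefactor $x_j^{-(k-1)}$, whereas the degree term of $\mathcal{Q}$ does not.) To be fair, the gap is already present in the paper, whose entire proof of Theorem~\ref{signless} is the phrase ``by the same argument,'' so you have faithfully reproduced its error rather than introduced a new one; but the Case~1/row-$1$ verification you deferred is precisely where the argument, and the theorem as stated, break down.
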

 \begin{proof} 
 Clearly, $\psi_1=2d_1$. Hence $q(\mathcal{H})\le \psi_1$ by \cite{yuan2015}.
 Let $Y=diag(y_1, 1, \cdots, 1)$ be the diagonal matrix  with $y_1^{k-1}=
 \frac{-(2d_2-A_1)+\sqrt{(2d_2-A_1)^2+8A_1d_1}}{2A_1}$
  and $\mathcal{M}=Y^{k-1}\cdot q(\mathcal{H}) \cdot Y$.
 Then by the same argument in Theorem~\ref{main}, it is easy to see that
  $$R_j(\mathcal{M})\le \psi_2 \ \mbox{for}\ j=1, \cdots, n.$$
 Hence $q(\mathcal{H})\le \psi_2.$
 
  We now consider $s\ge 3$.
 Let $X=diag(x_1, \cdots, x_n)$  be the diagonal matrix with $x_j=1+\frac{2(d_j-d_s)}{A_1+\psi_s}$ for $j=1, \cdots, s-1$ and $x_s=\cdots= x_n=1$.
Let $\mathcal{C}=X^{-(k-1)}\cdot Q(\mathcal{H}) \cdot X$.  For $ 1\le j\le s-1$, by the same argument in Theorem~\ref{main}, we have
$$R_j(\mathcal{C})\le  \psi_s \ \mbox{for}\ j=1, \cdots, n.$$
Hence $q(\mathcal{H})\le\psi_s$.
So the assertion holds.
\end{proof}
\begin{corollary}
Let $\mathcal{H}$ be a $k-$uniform hypergraph with the minimum degree $\delta$ and the size $m$ of edges. Then
$$q(\mathcal{H})\le \frac{2\delta+\frac{n-2}{k-1}\binom{n-2}{k-2}-\binom{n-1}{k-2}+\sqrt{\left(2\delta+\frac{n}{k-1}\binom{n-2}{k-2}-\binom{n-1}{k-2}\right)^2+\frac{8}{k-1}
\binom{n-2}{k-2}(km-n\delta)}}{2}.$$
 \end{corollary}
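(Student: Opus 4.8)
The plan is to specialize Theorem~\ref{signless} to the choice $s=n$ and then simplify, exactly in the same spirit as the Corollary obtained from $\rho(\mathcal{H})\le\phi_n$ in Theorem~\ref{main}. Taking $s=n$ forces $\binom{n-s}{k-r-2}=\binom{0}{k-r-2}$, which vanishes unless $r=k-2$, so the sum defining $A_2$ collapses to a single term and $A_2$ becomes one explicit binomial coefficient; likewise $(s-2)A_1=(n-2)A_1$ and $nA_1$ become explicit rational multiples of $\binom{n-2}{k-2}$, and $d_s=d_n=\delta$ since $\delta$ is the minimum degree.

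The one external ingredient needed is the handshake identity for hypergraphs: summing the degree over all vertices counts each of the $m$ edges exactly $k$ times, so $\sum_{t=1}^{n}d_t=km$. Hence
$$\sum_{t=1}^{s-1}(d_t-d_s)=\sum_{t=1}^{n-1}(d_t-d_n)=\Big(\sum_{t=1}^{n}d_t\Big)-n d_n=km-n\delta,$$
which turns the term $8A_1\sum_{t=1}^{s-1}(d_t-d_s)$ appearing in $\Theta$ into $\frac{8}{k-1}\binom{n-2}{k-2}(km-n\delta)$.

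Substituting all of this into $\psi_n=\frac{2d_n-A_2+(n-2)A_1+\sqrt{\Theta}}{2}$ and $\Theta=(2d_n+nA_1-A_2)^2+8A_1\sum_{t=1}^{n-1}(d_t-d_n)$ produces precisely the displayed bound, and since Theorem~\ref{signless} already gives $q(\mathcal{H})\le\min_{1\le s\le n}\psi_s\le\psi_n$, the corollary follows. There is essentially no obstacle: the substantive work was done in Theorem~\ref{signless}, and what remains is the bookkeeping of evaluating the degenerate binomial coefficients and invoking the handshake identity. The only point that requires a moment's care is verifying the vanishing of $\binom{0}{k-r-2}$ for $r\ne k-2$, so that $A_2$ reduces to the correct single binomial term entering the formula.
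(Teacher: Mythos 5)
Your approach coincides with the paper's: the paper proves the companion corollary for $\rho(\mathcal{H})$ by the single line ``follows from $\phi_n$ in Theorem~\ref{main},'' and this corollary is likewise just the case $s=n$ of Theorem~\ref{signless}; your explicit use of the handshake identity $\sum_{t=1}^{n}d_t=km$ to turn $8A_1\sum_{t=1}^{n-1}(d_t-d_n)$ into $\frac{8}{k-1}\binom{n-2}{k-2}(km-n\delta)$ is exactly the bookkeeping the paper leaves implicit. One caveat about your claim that the substitution ``produces precisely the displayed bound'': collapsing $A_2$ at $s=n$ leaves only the term $r=k-2$, so $A_2=\frac{k-1}{k-1}\binom{n-1}{k-1}=\binom{n-1}{k-1}$, whereas the corollary as printed has $\binom{n-1}{k-2}$; a sanity check at $k=2$ (where the bound should reduce to the classical $\frac{2\delta-1+\sqrt{(2\delta+1)^2+8(2m-n\delta)}}{2}$) confirms that $\binom{n-1}{k-1}=n-1$ is the correct value, so the discrepancy is a typo in the stated corollary rather than a flaw in your derivation, but you should carry out that one evaluation explicitly rather than assert the match.
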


 \frenchspacing

\end{document}